\documentclass[a4paper,10pt]{article}
\usepackage[english]{babel}
\usepackage{amsthm,amsmath,amssymb,bbm}
\usepackage{stackengine}
\usepackage{natbib}
\usepackage{graphicx}

\newtheorem{thm}{Theorem}[section]

\newtheorem{lemma}[thm]{Lemma}

\newtheorem{defin}[thm]{Definition}

\newtheorem{corollary}[thm]{Corollary}

\def\P{{\mathbb{P}}}

\def\N{{\mathbb{N}}}

\def\limt{\lim_{t\to\infty}}
\def\limt0{\lim_{t\to 0}}

\def\|{\,|\,}

\def\bn#1\en{\begin{align*}#1\end{align*}}
\def\bnn#1\enn{\begin{align}#1\end{align}}

\allowdisplaybreaks
\title{Special Dirichlet Processes: Structure, Uniqueness and Stability}
\author{Philip Kennerberg
\footnote{Philip Kennerberg  
Independent researcher, Sweden  
Email: pkennerberg@gmail.com}
}

\begin{document}
\maketitle

\begin{abstract}
We introduce the class of \emph{Special--Dirichlet processes}, consisting of càdlàg adapted processes admitting a decomposition
\[
X=M+\Gamma,
\]
where \(M\) is a local martingale and \(\Gamma\) is an adapted càdlàg process with vanishing continuous quadratic variation whose jumps are predictable and \(\mathcal F_{s-}\)-measurable.

This class arises naturally from transformations of special semimartingales. Classical results imply that sufficiently regular functions of special semimartingales belong to the broad class of Dirichlet processes. We show that such transformed processes possess substantially more structure: they admit a canonical decomposition in which the predictable jump component is explicitly separated from the martingale component. This yields a refinement of the traditional classification, which previously identified these processes only as Dirichlet processes.

We establish uniqueness of the decomposition and prove that the class is stable under a large family of nonsmooth transformations, including primitives of locally bounded functions with at most countably many discontinuities. An explicit It\^o-type decomposition is obtained in terms of the martingale jump measure and its compensator.

Finally, we investigate stability properties of the canonical decomposition. Under convergence in quadratic variation and Skorokhod \(J_1\)-convergence, we prove stability of both the martingale and singular components after transformation. The proof relies on a threshold isolation principle for jump structures, allowing large jumps to be separated from small-jump contributions and yielding convergence of the transformed decompositions.
\end{abstract}

\section{Introduction}

The class of Dirichlet processes was introduced in
\cite{Fol} as an extension of the class of continuous
semimartingales. In the original framework, a Dirichlet
process admits a decomposition
\[
X=M+A,
\]
where \(M\) is a martingale and \(A\) is a continuous process of
zero energy.

Subsequent developments led to several extensions of the original
Dirichlet framework. In \cite{NonCont}, the Dirichlet framework was extended beyond the
continuous semimartingale setting by considering decompositions
\[
X=Z+C,
\]
where \(Z\) is a semimartingale and \(C\) is continuous with
\([C]=0\). Stability questions under quadratic variation convergence in this setting were later studied
in \cite{Collectanea}. A different direction was pursued in \cite{WeakDir},
which introduced weak Dirichlet processes. More recently,
\cite{Low} considered processes of the form
\[
X=Z+C,
\]
where \(Z\) is a semimartingale and \(C\) is a c\`adl\`ag adapted
process satisfying
\[
[C]^c=0.
\]

This setting provides the natural ambient class for the present work.
In \cite{Low}, It\^o-type decomposition results were obtained for
locally Lipschitz functions of semimartingales and, more generally,
for Dirichlet processes under suitable non-charging conditions on the
continuous quadratic variation.

We adopt the notion of Dirichlet processes in the sense of Lowther,
namely processes of the form
\[
        X = Z + C,
\]
where \(Z\) is a semimartingale and \(C\) is a càdlàg adapted process
satisfying
\[
        [C]^c = 0.
\]
This definition allows for jump components in \(C\) and is therefore
suited to the present setting.

We also recall the classical (CMS/Föllmer-type) definition of
Dirichlet processes, where the residual component is required to be
continuous with zero quadratic variation. In particular, this excludes
jump contributions in the residual.

Finally, we consider weak Dirichlet processes in the sense of
Russo et al., which are defined by a decomposition
\[
        X = M + A,
\]
where \(M\) is a local martingale and \(A\) is orthogonal to all
continuous local martingales.

\medskip

The present work originates from the observation that the usual
Dirichlet classification is too coarse for many processes arising from
transformation theory. In particular, transformations of special
semimartingales are not merely Dirichlet processes in the generalized
sense of \cite{Low}. They possess a finer structure: after
transformation, the martingale and predictable jump contributions can
be separated in a canonical way.

A key motivation for introducing the Special--Dirichlet class is that
it arises naturally as the recipient of such transformation results.
Indeed, we show that if \(Z\) is a special semimartingale and \(f\)
belongs to a broad nonsmooth class, then \(f(Z)\) is a
Special--Dirichlet process. Thus the traditional conclusion that
\(f(Z)\) is Dirichlet can be sharpened: the transformed process admits
a canonical decomposition in which the local martingale part and the
predictable singular component are explicitly separated.

This is the main motivation for introducing the class of
\emph{Special--Dirichlet processes}. These are processes admitting a
decomposition
\[
X=M+\Gamma,
\]
where \(M\) is a local martingale and \(\Gamma\) is a c\`adl\`ag adapted
process with vanishing continuous quadratic variation whose jumps are
carried by a thin predictable set and satisfy
\[
        \Delta\Gamma_s\in\mathcal F_{s-}.
\]
The definition is modeled on the role of special semimartingales within
the class of semimartingales, but it is designed for the Dirichlet
setting: the singular component is allowed to have jumps, while its jump
structure is required to be predictable.

\medskip
\paragraph{Relations between Dirichlet-type classes.}
With these conventions, the following inclusions hold:
\[
\boxed{
\text{Special--Dirichlet}
\;\subset\;
\text{Dirichlet (Lowther)}
\;\subset\;
\text{weak Dirichlet}.
}
\]

The first inclusion follows from the fact that in our framework the
residual component \(\Gamma\) satisfies \([\Gamma]^c=0\), and hence
fits into the Lowther definition with \(Z=M\) and \(C=\Gamma\).
However, since \(\Gamma\) may exhibit jumps, our class is in general
not contained in the classical (CMS) Dirichlet class, where the
residual component is required to be continuous.

\medskip

Dirichlet processes in the sense of \cite{Low} do not admit a canonical
decomposition in general, since the splitting into semimartingale and
zero--continuous--quadratic--variation parts is not unique. By contrast,
weak Dirichlet processes admit a unique decomposition under the
orthogonality condition.

The uniqueness result therefore shows that the Special--Dirichlet
condition identifies a canonical substructure inside the general
Lowther framework: the decomposition is unique, while the singular
component retains an explicit interpretation in terms of predictable
jumps and vanishing continuous quadratic variation.

The class also has strong stability properties. We prove stability of the
canonical decomposition under convergence in quadratic variation, and we
establish stability under transformations. In the latter result,
simultaneous convergence in quadratic variation and in the Skorokhod
\(J_1\)-topology yields convergence of both the martingale and singular
components after transformation. The proof relies on a threshold
isolation principle for jump structures, allowing large jumps to be
separated from small-jump contributions.

\medskip

The article is organized as follows. Section 2 introduces notation and
preliminary material. Section 3 develops the basic theory of
Special--Dirichlet processes and establishes uniqueness of the canonical
decomposition. Section 4 proves stability of the decomposition itself.
Section 5 establishes closure under nonsmooth transformations and proves
stability of the transformed decompositions.

\section{Preliminaries}

We assume that all processes are defined on a common filtered probability
space
\[
(\Omega,\mathcal F,\{\mathcal F_t\}_{t\ge0},\mathbb P),
\]
and that all processes are adapted to a filtration satisfying the usual
conditions.

The term refining sequence will refer to a sequence
\[
D_n=\{\tau_0^n,\tau_1^n,\ldots,\tau_{m_n}^n\}
\]
of finite increasing families of stopping times satisfying
\[
0=\tau_0^n\le\tau_1^n\le\cdots\le\tau_{m_n}^n=t,
\]
and
\[
|D_n|
:=
\max_{0\le i<m_n}
(\tau_{i+1}^n-\tau_i^n)
\xrightarrow{\mathbb P}0.
\]

We say that a c\`adl\`ag process \(X\) admits a quadratic variation
\([X]\) if there exists an increasing continuous process \([X]^c\) such
that
\begin{align}\label{quad}
[X]_s
=
[X]^c_s
+
\sum_{u\le s}(\Delta X_u)^2,
\qquad 0\le s\le t,
\end{align}
and there exists a refining sequence \(\{D_n\}_n\) such that
\[
(S_n(X))_s
:=
\sum_{i=0}^{m_n-1}
\Bigl(
X_{\tau_{i+1}^n\wedge s}
-
X_{\tau_i^n\wedge s}
\Bigr)^2
\]
satisfies
\begin{align}\label{partialsums}
(S_n(X))_s
\xrightarrow{\mathbb P}
[X]_s
\qquad\text{as }n\to\infty,
\end{align}
for every \(0\le s\le t\).

We say that two c\`adl\`ag processes \(X\) and \(Y\) admit a
covariation \([X,Y]\) if there exists a continuous finite variation
process \([X,Y]^c\) such that
\[
[X,Y]_s
=
[X,Y]^c_s
+
\sum_{u\le s}\Delta X_u\,\Delta Y_u,
\qquad 0\le s\le t,
\]
and there exists a refining sequence \(\{D_n\}_n\) such that
\[
S_n(X,Y)_s
:=
\sum_{i=0}^{m_n-1}
\Bigl(
X_{\tau_{i+1}^n\wedge s}
-
X_{\tau_i^n\wedge s}
\Bigr)
\Bigl(
Y_{\tau_{i+1}^n\wedge s}
-
Y_{\tau_i^n\wedge s}
\Bigr)
\]
satisfies
\[
S_n(X,Y)_s
\xrightarrow{\mathbb P}
[X,Y]_s
\qquad\text{as }n\to\infty,
\]
for every \(0\le s\le t\).

By this definition, Dirichlet processes form a subclass of the processes
admitting quadratic variation. Note, however, that the decomposition
\[
X=Z+C
\]
is generally not unique, since any continuous finite variation component
may be transferred between \(Z\) and \(C\).

Given a c\`adl\`ag process \(X\) and a stopping time \(T\), we define
the stopped process
\[
X^T_t:=X_{t\wedge T}.
\]
We also define the supremum process
\[
X_t^*:=\sup_{s\le t}|X_s|.
\]

\begin{defin}\label{loc}
A property of a stochastic process is said to hold locally
(respectively pre-locally) if there exists an increasing sequence of
stopping times \(T_n\uparrow\infty\) such that the property holds for
\(X^{T_n}\) (respectively \(X^{T_n-}\)) for every \(n\).
\end{defin}

The following result is taken from \cite{Collectanea}
\begin{lemma}\label{triangle}
Assume that \(X^1,\ldots,X^n\) admit quadratic variations and
that all covariations needed below exist. Then
\[
        \left[\sum_{k=1}^n X^k\right]_t^{1/2}
        \le
        \sum_{k=1}^n [X^k]_t^{1/2}.
\]
\end{lemma}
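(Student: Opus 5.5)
The plan is to prove the inequality first at the level of the discrete approximating sums $S_n$, where it is simply Minkowski's inequality in $\ell^2$, and then pass to the limit in probability. To make this work, I would fix a single refining sequence $\{D_n\}_n$ along which all the quadratic variations and covariations involved converge. By hypothesis each $[X^k]$ and each needed covariation $[X^j,X^k]$ exists along some refining sequence. These sequences may differ, so I would interpret the assumption that ``all covariations needed below exist'' as meaning convergence along a common $\{D_n\}_n$, which is the convention of \cite{Collectanea}. If only separate sequences were given, I would pass to a common refinement and subsequences chosen with a diagonal argument; this is where some care is required.

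Along the common sequence, write $\Delta_i^n X^k := X^k_{\tau^n_{i+1}\wedge t}-X^k_{\tau^n_i\wedge t}$. Minkowski's inequality in $\mathbb R^{m_n}$ gives, pathwise,
\[
S_n\Bigl(\sum_{k=1}^n X^k\Bigr)_t^{1/2}
=\Bigl(\sum_i\Bigl(\sum_k \Delta_i^n X^k\Bigr)^2\Bigr)^{1/2}
\le \sum_k\Bigl(\sum_i (\Delta_i^n X^k)^2\Bigr)^{1/2}
=\sum_k S_n(X^k)_t^{1/2}.
\]
Bilinearity of the discrete sums also gives
\[
S_n\Bigl(\sum_k X^k\Bigr)_t=\sum_{j,k} S_n(X^j,X^k)_t .
\]
Since each term on the right converges in probability, the left side converges in probability to $\sum_{j,k}[X^j,X^k]_t$. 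This limit is what identifies $[\sum_k X^k]_t$: the jump part expands as $\sum_{u\le t}(\sum_k\Delta X^k_u)^2=\sum_{j,k}\sum_{u\le t}\Delta X^j_u\Delta X^k_u$, and the continuous part is $\sum_{j,k}[X^j,X^k]^c$. The square root is continuous on $[0,\infty)$, so both sides of the displayed inequality converge in probability. Passing to an almost surely convergent subsequence preserves the inequality in the limit, which gives
\[
\Bigl[\sum_k X^k\Bigr]_t^{1/2}\le\sum_k [X^k]_t^{1/2}\quad\text{a.s.}
\]

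The main obstacle is bookkeeping rather than analysis. One must ensure that the limit of $S_n(\sum_k X^k)$ really is the quadratic variation of the sum as defined in \eqref{quad}, and that all the relevant sums converge along the same partitions. The bilinear expansion handles the first point. The second relies on the interpretation of the hypothesis described above; failing that, on extracting subsequences on which the convergence holds almost surely.
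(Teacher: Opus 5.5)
The paper does not prove this lemma; it quotes it from \cite{Collectanea}, so there is no in-paper argument to compare yours against.

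Your proof is correct, provided all quadratic variations and covariations are taken along a common refining sequence. It applies Minkowski's inequality in $\ell^2$ pathwise to the discrete sums along one refining sequence, then passes to the limit in probability via an almost surely convergent subsequence, which is the standard argument. The common-sequence convention is the one this paper itself relies on: see the hypothesis ``along the same refining sequence'' in Theorem~\ref{C1}.

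The other usual route first obtains $|[X,Y]_t|\le [X]_t^{1/2}[Y]_t^{1/2}$ from the discrete Cauchy--Schwarz inequality, then expands $[X+Y]_t=[X]_t+2[X,Y]_t+[Y]_t$ and inducts on the number of summands. Yours is more direct. The covariations enter only to guarantee that $S_n(\sum_k X^k)_t$ converges along the common sequence. If existence of $[\sum_k X^k]$ along that sequence is read into the hypothesis, they are not needed at all.

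Drop the fallback about passing to a common refinement. For processes that are not semimartingales, quadratic variation genuinely depends on the refining sequence and is not inherited by refinements. There are continuous deterministic functions with zero quadratic variation along one nested sequence of dyadic partitions and strictly positive quadratic variation along a finer one. So a common sequence cannot be manufactured from separate ones: subsequences of a single sequence are harmless, but merging different sequences is not. For the same reason, your bilinear expansion shows only that the limit along $D_n$ exists. It does not show that this limit agrees with a quadratic variation of the sum computed along some other sequence.

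Finally, you use $n$ both for the number of summands and for the partition index; rename one of them.
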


We also have the analogous statement for purely discontinuous quadratic variation.

\begin{lemma}
\label{lem:disc-qv-triangle}
Let \(X^1,\ldots,X^n\) be c\`adl\`ag processes such that
\[
        \sum_{s\le t}(\Delta X^k_s)^2<\infty,
        \qquad k=1,\ldots,n.
\]
Then
\[
        \left(
        \sum_{s\le t}
        \left(\sum_{k=1}^n \Delta X^k_s\right)^2
        \right)^{1/2}
        \le
        \sum_{k=1}^n
        \left(
        \sum_{s\le t}(\Delta X^k_s)^2
        \right)^{1/2}.
\]
\end{lemma}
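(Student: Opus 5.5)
This is Minkowski's inequality in the sequence space \(\ell^2\) over the jump times. The first step is to note that each \(X^k\) is càdlàg, so it has at most countably many jumps on \([0,t]\). Hence the set
\[
J:=\bigcup_{k=1}^n\{s\le t:\Delta X^k_s\neq 0\}
\]
is countable, being a finite union of countable sets. Every sum over \(s\le t\) in the statement therefore reduces to a sum over \(J\), since all terms with \(s\notin J\) vanish. By hypothesis, for each \(k\) the family \(a^k:=(\Delta X^k_s)_{s\in J}\) belongs to \(\ell^2(J)\), with norm \(\|a^k\|_{\ell^2(J)}=\bigl(\sum_{s\le t}(\Delta X^k_s)^2\bigr)^{1/2}\).

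The second step is to observe that the left-hand side of the statement is exactly \(\bigl\|\sum_{k=1}^n a^k\bigr\|_{\ell^2(J)}\), because the jump of the sum at \(s\) is \(\sum_k\Delta X^k_s\). The claimed inequality is then the triangle inequality for the \(\ell^2(J)\)-norm, applied iteratively \(n-1\) times, or equivalently by induction on \(n\). That \(\sum_k a^k\) has finite norm follows from the triangle inequality itself, or directly from the elementary bound \((\sum_k x_k)^2\le n\sum_k x_k^2\).

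To be self-contained without appealing to \(\ell^2\) theory on a countable index set, one can instead argue on finite subsets. Enumerate \(J=\{s_1,s_2,\ldots\}\) and fix \(N\). For the finitely many points \(s_1,\ldots,s_N\), the Euclidean triangle inequality in \(\mathbb R^N\) (that is, Cauchy–Schwarz) gives
\[
\Bigl(\sum_{j\le N}\Bigl(\sum_k\Delta X^k_{s_j}\Bigr)^2\Bigr)^{1/2}\le\sum_k\Bigl(\sum_{j\le N}(\Delta X^k_{s_j})^2\Bigr)^{1/2}\le\sum_k\Bigl(\sum_{s\le t}(\Delta X^k_s)^2\Bigr)^{1/2}.
\]
Letting \(N\to\infty\) and using monotone convergence of nonnegative series then yields the statement, pathwise for each \(\omega\).

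There is no real obstacle here. The only point needing care is justifying that the sums are over countable sets and that the passage to the limit is legitimate, and both are routine.
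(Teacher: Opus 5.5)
Your proposal is correct and follows the same route as the paper, which simply invokes the triangle (Minkowski) inequality in \(\ell^2\) for the sequences \(a^k_s=\Delta X^k_s\), \(s\le t\). Your remarks on the countability of the jump set and the finite-truncation and monotone-convergence step make explicit details that the paper leaves implicit.
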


\begin{proof}
This is triangle inequality in \(\ell^2\). Applied to
the sequences
\[
        a^k_s:=\Delta X^k_s,
        \qquad s\le t,\quad k=1,\ldots,n,
\]
it gives
\[
        \left(
        \sum_{s\le t}
        \left(\sum_{k=1}^n a^k_s\right)^2
        \right)^{1/2}
        \le
        \sum_{k=1}^n
        \left(
        \sum_{s\le t}(a^k_s)^2
        \right)^{1/2}.
\]
Substituting \(a^k_s=\Delta X^k_s\) gives the result.
\end{proof}

\section{Special Dirichlet Processes}\label{sec:special}
In this section we introduce the class of Special--Dirichlet
processes and establish the basic properties needed in the sequel.
In particular, we prove uniqueness of the associated canonical
decomposition.

\begin{defin}\label{DefinSpecial}
Let \(X\) be a càdlàg adapted process on \([0,t]\). We say that \(X\) is a
Special--Dirichlet process if it admits a decomposition
\[
        X = M + \Gamma,
\]
where \(M\) is a local martingale with \(M_0=0\), and \(\Gamma\) is an
adapted càdlàg process such that
\[
        [\Gamma]^c_t = 0,
\]
and the jumps of \(\Gamma\) are carried by a thin predictable set and
satisfy
\[
        \Delta \Gamma_s \in \mathcal{F}_{s-}, \qquad s \le t.
\]
\end{defin}
The additional predictable jump structure imposed on \(\Gamma\)
leads to a canonical decomposition.
The remainder of this section is devoted to establishing this fact.

\begin{lemma}[Uniqueness of the local martingale component]
Let \(X\) be a Special--Dirichlet process on \([0,t]\). Suppose that
\[
        X=M+\Gamma
        =
        \widetilde M+\widetilde\Gamma ,
\]
where both decompositions satisfy Definition~\ref{DefinSpecial}, and
where
\[
        M_0=\widetilde M_0=0 .
\]
Then
\[
        M=\widetilde M
\]
up to indistinguishability on \([0,t]\).
\end{lemma}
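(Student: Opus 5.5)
The plan is to set $N:=M-\widetilde M=\widetilde\Gamma-\Gamma$. Then $N$ is a local martingale with $N_0=0$, and it is enough to show $N\equiv 0$ on $[0,t]$. I will do this in two steps. First I show that $N$ has no jumps, using the predictable, $\mathcal F_{s-}$-measurable jump structure inherited from $\Gamma$ and $\widetilde\Gamma$. Then I show that $[N]^c=0$, using the vanishing continuous quadratic variations. A continuous local martingale with zero quadratic variation vanishes identically.

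\emph{Step 1 (no jumps).} Since $\Delta N=\Delta\widetilde\Gamma-\Delta\Gamma$, the jumps of $N$ are carried by the union of the two thin predictable sets. This union is again thin and predictable, so it is exhausted by the graphs of a sequence $(T_k)_k$ of predictable times with disjoint graphs. Fix $k$ and reduce to a localized setting by a localizing sequence for $N$; a predictable time is announced, so localization is compatible with it. The standard fact for local martingales then gives $\mathbb E[\Delta N_{T_k}\mathbbm 1_{\{T_k\le t\}}\mid\mathcal F_{T_k-}]=0$. On the other hand, the hypothesis $\Delta\Gamma_s,\Delta\widetilde\Gamma_s\in\mathcal F_{s-}$, read at the predictable time $T_k$, says that $\Delta N_{T_k}\mathbbm 1_{\{T_k\le t\}}$ is $\mathcal F_{T_k-}$-measurable. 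Hence this quantity equals its own conditional expectation, which is $0$. Taking the union over $k$, we get $\Delta N=0$ off an evanescent set, so $N$ is continuous.

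\emph{Step 2 (zero bracket).} Since $N$ is a continuous local martingale, its quadratic variation exists and satisfies $[N]=[N]^c$. Expanding $N=\widetilde\Gamma-\Gamma$ gives $[N]^c=[\widetilde\Gamma]^c+[\Gamma]^c-2[\widetilde\Gamma,\Gamma]^c$. The first two terms vanish by Definition~\ref{DefinSpecial}. For the cross term I use the Kunita--Watanabe/Cauchy--Schwarz bound $|[\widetilde\Gamma,\Gamma]^c_t|\le([\widetilde\Gamma]^c_t)^{1/2}([\Gamma]^c_t)^{1/2}=0$; this is the continuous-part analogue of Lemma~\ref{triangle}, and one could alternatively apply Lemma~\ref{triangle} together with Lemma~\ref{lem:disc-qv-triangle} to the continuous parts. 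Therefore $[N]_t=0$. It follows that $N^2-[N]=N^2$ is a nonnegative continuous local martingale starting at $0$, hence a supermartingale starting at $0$, so $N\equiv0$ on $[0,t]$ up to indistinguishability.

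I expect the main obstacle to be in Step 2: justifying that the covariation $[\widetilde\Gamma,\Gamma]$ exists along a common refining sequence, so that the expansion of $[N]^c$ is legitimate. I would handle this by polarization: $[\widetilde\Gamma,\Gamma]$ is defined through $[\widetilde\Gamma+\Gamma]$ and $[N]$, the latter existing because $N$ is a local martingale, and one extracts a common subsequence of partitions along which all partial sums converge almost surely. A lesser technical point is in Step 1: making precise that ``$\Delta\Gamma_s\in\mathcal F_{s-}$'' yields $\mathcal F_{T-}$-measurability at predictable times $T$. I would take this as the intended meaning of the definition on the thin predictable set.
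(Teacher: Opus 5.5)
Your proposal is correct and is essentially the paper's proof. It uses the same difference $N=M-\widetilde M=\widetilde\Gamma-\Gamma$, the same Cauchy--Schwarz bound on $[\widetilde\Gamma,\Gamma]^c$ to get $[N]^c=0$, and the same predictable-time argument to rule out jumps ($\Delta N_{T_k}$ is $\mathcal F_{T_k-}$-measurable with zero conditional mean). You only reverse the order of the two steps and finish more explicitly via the nonnegative local martingale $N^2$. Your polarization remark needs $[\widetilde\Gamma+\Gamma]$ to exist along a common refining sequence, but the paper leaves this same existence point implicit when it writes down $[\widetilde\Gamma,\Gamma]^c$.
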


\begin{proof}
Set
\[
        N:=M-\widetilde M .
\]
Then \(N\) is a local martingale and
\[
        N=\widetilde\Gamma-\Gamma .
\]

Since
\[
        [\Gamma]^c_t=[\widetilde\Gamma]^c_t=0,
\]
we have
\[
        [N]^c_t
        =
        [\widetilde\Gamma-\Gamma]^c_t
        =
        [\widetilde\Gamma]^c_t
        +
        [\Gamma]^c_t
        -
        2[\widetilde\Gamma,\Gamma]^c_t.
\]
Moreover, by the Cauchy--Schwarz inequality for continuous
covariation,
\[
        \bigl|[\widetilde\Gamma,\Gamma]^c_t\bigr|
        \le
        [\widetilde\Gamma]^c_t{}^{1/2}
        [\Gamma]^c_t{}^{1/2}
        =
        0.
\]
Hence
$
        [\widetilde\Gamma,\Gamma]^c_t=0,
$
and therefore
$
        [N]^c_t=0.
$

It remains to show that \(N\) has no jumps. By the definition of a
Special--Dirichlet process, the jumps of \(\Gamma\) and
\(\widetilde\Gamma\) are contained in thin predictable sets, and their jumps are \(\mathcal F_{s-}\)-measurable. Hence the jump times of
$
        \widetilde\Gamma-\Gamma
$
are carried by a thin predictable set and have \(\mathcal F_{s-}\)-
measurable jumps.

Let \((T_k)_{k\ge1}\) be predictable stopping times exhausting this thin
predictable set. Then, for each \(k\),
\[
        \Delta N_{T_k}
        =
        \Delta\widetilde\Gamma_{T_k}
        -
        \Delta\Gamma_{T_k}
        \in\mathcal F_{T_k-}.
\]
After localization, the jump is integrable. Since \(N\) is a local
martingale and \(T_k\) is predictable,
\[
        \mathbb E[\Delta N_{T_k}\mid\mathcal F_{T_k-}]=0.
\]
Because \(\Delta N_{T_k}\) is already \(\mathcal F_{T_k-}\)-measurable,
we obtain
\[
        \Delta N_{T_k}=0
        \qquad\text{a.s.}
\]
for every \(k\). Thus \(N\) has no jumps.

Consequently,
\[
        [N]_t=[N]^c_t+\sum_{s\le t}(\Delta N_s)^2=0.
\]
Since \(N_0=0\), it follows that
$
        N\equiv0
$
on \([0,t]\), up to indistinguishability. Hence
$
        M=\widetilde M .
$
\end{proof}

The above Lemma motivates the use of the following notation. For a special Dirichlet process $X$ let $\mathfrak{M}(X)$ denote its unique local martingale component and let $\mathfrak{\Gamma}(X)$ denote its unique singular component.
\section{Stability of the Canonical Decomposition}
The uniqueness result of the previous section identifies a canonical
martingale component $\mathfrak{M}(X)$ and a canonical singular component
$\mathfrak{\Gamma}(X)$ for every Special--Dirichlet process \(X\).
A natural question is whether this decomposition is stable under
perturbations of the underlying process.
The purpose of this section is to show that convergence in quadratic
variation propagates to the canonical decomposition itself.
\begin{lemma}[Stability of the Special--Dirichlet decomposition]
\label{lem:SD-decomposition-stability}
Let \(X^n\) and \(X\) be Special--Dirichlet processes on \([0,t]\), with
canonical decompositions
\[
        X^n=M^n+\Gamma^n,
        \qquad
        X=M+\Gamma,
\]
where \(M^n_0=M_0=0\). Assume that
\[
        [X^n-X]_t \xrightarrow{\mathbb P} 0 .
\]
Then
\[
        [\Gamma^n-\Gamma]_t \xrightarrow{\mathbb P} 0
\]
and
\[
        [M^n-M]_t \xrightarrow{\mathbb P} 0 .
\]
\end{lemma}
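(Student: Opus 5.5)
Set $D^n:=X^n-X=(M^n-M)+(\Gamma^n-\Gamma)=:N^n+G^n$. Here $N^n$ is a local martingale with $N^n_0=0$. The process $G^n$ has $[G^n]^c=0$, by the Cauchy--Schwarz argument already used in the uniqueness lemma. Its jumps are carried by a thin predictable set and are $\mathcal F_{s-}$-measurable. The plan is to show that the quadratic variation of $D^n$ splits orthogonally:
\[
[D^n]_t=[N^n]_t+[G^n]_t .
\]
Both conclusions then follow at once, since both terms are nonnegative and dominated by $[D^n]_t\to0$ in probability.

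\textbf{Continuous parts.} Since $[G^n]^c=0$, the Cauchy--Schwarz inequality for continuous covariation gives $[N^n,G^n]^c=0$. Hence
\[
[D^n]^c=[N^n]^c,\qquad [G^n]^c=0 .
\]

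\textbf{Jump parts.} Here I use the predictable structure, and this is the main step. Let $(T_k)$ be predictable times exhausting the thin predictable set carrying the jumps of $\Gamma^n$ and $\Gamma$ (countable union over $n$). Off $\bigcup_k[\![T_k]\!]$ we have $\Delta G^n=0$, so $\Delta D^n=\Delta N^n$ there.

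At a time $T_k$, the jump $\Delta D^n_{T_k}=\Delta X^n_{T_k}-\Delta X_{T_k}$ splits as $\Delta N^n_{T_k}+\Delta G^n_{T_k}$, with $\Delta G^n_{T_k}\in\mathcal F_{T_k-}$ and, after localization, $\mathbb E[\Delta N^n_{T_k}\mid\mathcal F_{T_k-}]=0$. This orthogonality does not make the cross term $\sum_s\Delta N^n_s\Delta G^n_s$ vanish pathwise. So rather than a pathwise identity, I would show
\[
[N^n]_t\to0 \quad\text{and}\quad \sum_{s\le t}(\Delta G^n_s)^2\to 0
\]
by an $L^2$-type argument after localization and truncation.

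\textbf{Localization and truncation.} Take stopping times $S_m$ such that $[D^n]_{S_m}\le 1$ (achieved by stopping when $[D^n]$ first exceeds a threshold, restricted to events of high probability). On the stopped interval $N^n$ is a locally square integrable martingale up to the jump at the stopping time; pre-localization handles that jump. At each $T_k$, conditional orthogonality gives
\[
\mathbb E[(\Delta D^n_{T_k})^2]=\mathbb E[(\Delta N^n_{T_k})^2]+\mathbb E[(\Delta G^n_{T_k})^2].
\]
Summing over $k$ and adding the contributions off the thin set (where $\Delta D^n=\Delta N^n$) and the continuous parts yields
\[
\mathbb E[D^n]_{\tau}=\mathbb E[N^n]_{\tau}+\mathbb E\sum_{s\le\tau}(\Delta G^n_s)^2 .
\]
Convergence in probability of $[D^n]_t$, together with the uniform bound from stopping, gives $\mathbb E[D^n]_{\tau}\to0$ by dominated convergence. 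Hence both terms on the right tend to $0$, which gives convergence in probability after undoing the localization.

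\textbf{Conclusion.} We have $[\Gamma^n-\Gamma]_t=\sum_{s\le t}(\Delta G^n_s)^2$ since its continuous part vanishes, so it tends to $0$ by the previous step, and $[M^n-M]_t=[N^n]_t\to0$ likewise. Alternatively, once one of the two limits is known, Lemma~\ref{triangle} recovers the other from $[D^n]_t^{1/2}\to0$.

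\textbf{Main obstacle.} The delicate point is making the localization uniform in $n$ so that dominated convergence applies. In particular, the jump of $D^n$ at the localizing time must be controlled: after stopping it is bounded by $[D^n]^{1/2}$ evaluated just after that time, and this again tends to $0$ in probability. If that fails, I would fall back on pre-localization and a truncation of the large jumps of $N^n$.
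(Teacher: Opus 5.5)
There is a genuine gap, exactly at the step you flagged as the main obstacle: nothing yields $\mathbb E[D^n]_\tau\to0$. Stopping cannot give the ``uniform bound from stopping'', since $[D^n]_\tau=[D^n]_{\tau-}+(\Delta D^n_\tau)^2$ and the jump at $\tau$ is uncontrolled. That this jump tends to $0$ in probability provides no domination.

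Neither fallback helps. Pre-stopping bounds $[D^n]_{\tau-}$ by $\eta$, but your orthogonality step at a predictable $T_k$ needs an $\mathcal F_{T_k-}$-measurable indicator. The event $\{T_k\le\tau\}$ qualifies, but $\{T_k<\tau\}$ in general does not, because whether $\tau=T_k$ depends on $\Delta D^n_{T_k}$. Truncating the large jumps of $N^n$ destroys $\mathbb E[\Delta N^n_{T_k}\mid\mathcal F_{T_k-}]=0$. Conceptually, $\Delta G^n_T=\mathbb E[\Delta D^n_T\mid\mathcal F_{T-}]$ is a predictable projection. It is stable under $L^1$-type convergence, not under convergence in probability.

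In fact the gap cannot be closed, because the statement is false as written. Let $\mathbb P(\xi_n=n)=1/n=1-\mathbb P(\xi_n=0)$. Let $\mathcal F_s$ be trivial for $s<1$ and $\mathcal F_s=\sigma(\xi_1,\xi_2,\dots)$ (completed) for $s\ge1$. Take $t\ge1$, $X=M=\Gamma=0$ and $X^n=\xi_n\mathbf 1_{[1,\infty)}$. The canonical decomposition of $X^n$ is $M^n=(\xi_n-1)\mathbf 1_{[1,\infty)}$, a martingale because $\mathbb E\xi_n=1$ and $\mathcal F_{1-}$ is trivial, plus $\Gamma^n=\mathbf 1_{[1,\infty)}$, a deterministic jump at the predictable time $1$. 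Then $[X^n-X]_t=\xi_n^2\to0$ in probability. Yet $[\Gamma^n-\Gamma]_t=1$ for all $n$, and $[M^n-M]_t=(\xi_n-1)^2\to1$ in probability. Every stopping time here is either a.s.\ a constant in $[0,1)$ or a.s.\ $\ge1$, so $\mathbb E[D^n]_{\tau\wedge t}\in\{0,n\}$ and no localization helps.

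The paper's proof fails at the same place. It applies conditional orthogonality with $\mathbf 1_{\{T^n_k<\tau^n_\eta\}}$, which is not $\mathcal F_{T^n_k-}$-measurable. In this example its inequality $\mathbb E[(\Delta B^n_1)^2\mathbf 1_{\{1<\tau^n_\eta\}}]\le\mathbb E[(\Delta Y^n_1)^2\mathbf 1_{\{1<\tau^n_\eta\}}]$ reads $1-1/n\le0$.

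What your argument does prove, once $\{T_k\le\tau\}$ is used, is $\mathbb E[G^n]_\tau\le\mathbb E[D^n]_\tau$ for every stopping time $\tau$. By conditional Jensen this needs no square integrability of $N^n$. Hence the lemma holds under an extra uniform integrability hypothesis, for example that $\{\sup_{s\le t}(\Delta(X^n-X)_s)^2\}_n$ is uniformly integrable. Indeed, take $\tau=\inf\{s:[D^n]_s>\eta\}\wedge t$. Then $[D^n]_\tau\le\min\{[D^n]_t,\ \eta+\sup_{s\le t}(\Delta D^n_s)^2\}$, which gives $\mathbb E[D^n]_\tau\to0$. Moreover $\mathbb P([G^n]_t>\varepsilon)\le\mathbb P([D^n]_t>\eta)+\varepsilon^{-1}\mathbb E[D^n]_\tau$, and Lemma~\ref{triangle} then gives $[N^n]_t\to0$.
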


\begin{proof}
Set
\[
        Y^n:=X^n-X,\qquad
        N^n:=M^n-M,\qquad
        B^n:=\Gamma^n-\Gamma .
\]
Then
\[
        Y^n=N^n+B^n .
\]
By the definition of Special--Dirichlet processes, \(B^n\) has zero
continuous quadratic variation and its jump times are carried by a thin
predictable set. Moreover,
\[
        \Delta B^n_s\in\mathcal F_{s-}
\]
at every jump time of \(B^n\).

Let \((T^n_k)_{k\ge1}\) be predictable stopping times exhausting the jump
times of \(B^n\). Since
\[
        [B^n]^c_t=0,
\]
we have
\[
        [B^n]_t
        =
        \sum_{k\ge1:T^n_k\le t}
        \bigl(\Delta B^n_{T^n_k}\bigr)^2 .
\]

Fix \(\eta>0\), and define the stopping time
\[
        \tau^n_\eta
        :=
        \inf\{s\le t:[Y^n]_s>\eta\}.
\]
On the event \(\{\tau^n_\eta>t\}\), we have \([Y^n]_t\le \eta\).

We first estimate the predictable jump part before \(\tau^n_\eta\). After
standard localization, we may assume that the local martingale \(N^n\) is
square-integrable up to \(t\). For every \(k\), on the event
\(\{T^n_k<\tau^n_\eta\}\), the variable
\[
        \Delta B^n_{T^n_k}
\]
is \(\mathcal F_{T^n_k-}\)-measurable. Since \(N^n\) is a local martingale,
\[
        \mathbb E\!\left[
        \Delta N^n_{T^n_k}
        \mid
        \mathcal F_{T^n_k-}
        \right]
        =0 .
\]
Hence
\[
\begin{aligned}
\mathbb E\!\left[
        \bigl(\Delta Y^n_{T^n_k}\bigr)^2
        \mid
        \mathcal F_{T^n_k-}
        \right]
&=
\mathbb E\!\left[
        \bigl(\Delta N^n_{T^n_k}
        +
        \Delta B^n_{T^n_k}\bigr)^2
        \mid
        \mathcal F_{T^n_k-}
        \right]
\\
&=
        \bigl(\Delta B^n_{T^n_k}\bigr)^2
        +
        \mathbb E\!\left[
        \bigl(\Delta N^n_{T^n_k}\bigr)^2
        \mid
        \mathcal F_{T^n_k-}
        \right]
\\
&\ge
        \bigl(\Delta B^n_{T^n_k}\bigr)^2 .
\end{aligned}
\]
Therefore,
\[
        \mathbb E\!\left[
        \bigl(\Delta B^n_{T^n_k}\bigr)^2
        \mathbf 1_{\{T^n_k<\tau^n_\eta\}}
        \right]
        \le
        \mathbb E\!\left[
        \bigl(\Delta Y^n_{T^n_k}\bigr)^2
        \mathbf 1_{\{T^n_k<\tau^n_\eta\}}
        \right].
\]
Summing over \(k\) and using monotone convergence gives
\[
\begin{aligned}
\mathbb E\!\left[
        \sum_{k\ge1:T^n_k<\tau^n_\eta}
        \bigl(\Delta B^n_{T^n_k}\bigr)^2
        \right]
&\le
\mathbb E\!\left[
        \sum_{k\ge1:T^n_k<\tau^n_\eta}
        \bigl(\Delta Y^n_{T^n_k}\bigr)^2
        \right]
\\
&\le
\mathbb E\!\left[
        \sum_{s<\tau^n_\eta}
        \bigl(\Delta Y^n_s\bigr)^2
        \right]
\\
&\le
\eta .
\end{aligned}
\]
The last inequality follows from the definition of \(\tau^n_\eta\).

By Markov's inequality,
\[
\mathbb P\!\left(
        \sum_{k\ge1:T^n_k<\tau^n_\eta}
        \bigl(\Delta B^n_{T^n_k}\bigr)^2
        >\varepsilon
        \right)
        \le
        \frac{\eta}{\varepsilon}.
\]
Hence
\[
\begin{aligned}
\mathbb P\!\left([B^n]_t>\varepsilon\right)
&\le
\mathbb P(\tau^n_\eta\le t)
+
\mathbb P\!\left(
        \sum_{k\ge1:T^n_k<\tau^n_\eta}
        \bigl(\Delta B^n_{T^n_k}\bigr)^2
        >\varepsilon
        \right)
\\
&\le
\mathbb P([Y^n]_t>\eta)
+
\frac{\eta}{\varepsilon}.
\end{aligned}
\]
Since \([Y^n]_t=[X^n-X]_t\to0\) in probability, we obtain
\[
        \limsup_{n\to\infty}
        \mathbb P\!\left([B^n]_t>\varepsilon\right)
        \le
        \frac{\eta}{\varepsilon}.
\]
Letting \(\eta\downarrow0\), we conclude that
\[
        [B^n]_t=[\Gamma^n-\Gamma]_t
        \xrightarrow{\mathbb P}0 .
\]

It remains to prove the convergence of the martingale parts. Since
\[
        N^n=Y^n-B^n,
\]
we have, by the triangle inequality for quadratic variation,
\[
        [N^n]_t^{1/2}
        \le
        [Y^n]_t^{1/2}
        +
        [B^n]_t^{1/2}.
\]
Therefore
\[
        [M^n-M]_t=[N^n]_t
        \xrightarrow{\mathbb P}0 .
\]
\end{proof}



\section{Transformations of Special–Dirichlet Processes}
Having established stability of the canonical decomposition, we now
turn to transformation theory.
The main question is whether the Special--Dirichlet structure is
preserved under nonsmooth changes of variables.
Our first result shows that the class is closed under a broad family
of transformations and provides an explicit decomposition of the
transformed process. 

For a function \(f:\mathbb R\to\mathbb R\) we define
\[
f'(x)
=
\limsup_{h\downarrow0}
\frac{f(x+h)-f(x)}{h}.
\]
Throughout the paper, \(f'\) refers to this upper Dini derivative.

\begin{thm}[Closure under nonsmooth transformations]
\label{thm:SD-closure}
Let \(X\) be a Special--Dirichlet process on \([0,t]\), with canonical
decomposition
\[
        X=M+\Gamma,
        \qquad M_0=0,
\]
where \(M\) is a local martingale and \(\Gamma\) is an adapted càdlàg
process such that
$
        [\Gamma]^c_t=0,
$
and whose jumps are carried by a thin predictable set and satisfy
$
        \Delta\Gamma_s\in\mathcal F_{s-}.
$
Let \(f\) be the primitive of a locally bounded function \(f'\) with an
at most countable set of discontinuities. Assume that
\[
        \int_0^t
        \mathbf 1_{\{X_s\notin\operatorname{diff}(f)\}}\,d[X]^c_s=0.
\]
Then \(f(X)\) is again a Special--Dirichlet process.

More precisely, 
\[
        f(X)=M^f+\Gamma^f,
\]
where
\[
        M^f_t
        :=
        \int_0^t f'(X_{s-}+\Delta\Gamma_s)\,dM^c_s
        +
        \int_0^t\int_{\mathbb R}
        \left(f(X_{s-}+\Delta\Gamma_s+x)-f(X_{s-}+\Delta\Gamma_s)\right)(\mu^M-\nu^M)(ds,dx)
\]
is a local martingale, and \(\Gamma^f:=f(X)-M^f\) satisfies
$
        [\Gamma^f]^c_t=0.
$

Moreover,
\[
        \Delta\Gamma^f_s
        =
        f(X_{s-}+\Delta\Gamma_s)-f(X_{s-})
        +
        \int_{\mathbb R}
        \left(f(X_{s-}+\Delta\Gamma_s+x)-f(X_{s-}+\Delta\Gamma_s)\right)\nu^M(\{s\},dx).
\]
In particular, the jumps of \(\Gamma^f\) are carried by a thin predictable
set and satisfy
\[
        \Delta\Gamma^f_s\in\mathcal F_{s-}.
\]
\end{thm}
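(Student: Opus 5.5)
The plan is to reduce the claim to three checks: $M^f$ is a well-defined local martingale, $[\Gamma^f]^c_t=0$, and the jumps of $\Gamma^f$ are given by the displayed formula and are $\mathcal F_{s-}$-measurable on a thin predictable set. Throughout we localize so that $X$, $M$ and $f'$ (on the relevant compact range) are bounded. We also truncate the big jumps of $M$ via stopping times, which is harmless because the target statements are local.

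\textbf{Step 1: $M^f$ is a local martingale.} Write $Y_s:=X_{s-}+\Delta\Gamma_s$. This process is predictable, because $X_{-}$ is predictable and $\Delta\Gamma$ vanishes off a thin predictable set on which it is $\mathcal F_{s-}$-measurable. Hence $f'(Y_s)$ is a locally bounded predictable integrand and $\int f'(Y)\,dM^c$ is a continuous local martingale. The integrand $W(s,x):=f(Y_s+x)-f(Y_s)$ is $\tilde{\mathcal P}$-measurable. Since $f$ is locally Lipschitz, $|W(s,x)|\le C|x|$ on the localized range. Then $\sum_{s}W(s,\Delta M_s)^2\le C^2[M]$ is locally integrable, so $W*(\mu^M-\nu^M)$ is a well-defined purely discontinuous local martingale by the standard theory in Jacod--Shiryaev. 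Its jump at $s$ is $W(s,\Delta M_s)-\int W(s,x)\nu^M(\{s\},dx)$.

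\textbf{Step 2: jumps of $\Gamma^f$.} Since $\Delta X_s=\Delta M_s+\Delta\Gamma_s$, we have
\[
\Delta f(X)_s=f(Y_s+\Delta M_s)-f(X_{s-})=\bigl(f(Y_s)-f(X_{s-})\bigr)+W(s,\Delta M_s).
\]
Subtracting $\Delta M^f_s$ gives exactly the claimed formula for $\Delta\Gamma^f_s$. Every term in that formula is $\mathcal F_{s-}$-measurable: this holds for $X_{s-}$ and $\Delta\Gamma_s$, and $\nu^M(\{s\},\cdot)$ is predictable. The formula vanishes unless $\Delta\Gamma_s\neq0$ or $\nu^M(\{s\},\mathbb R)>0$. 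Both of these sets are thin predictable sets, the latter because $\{\nu(\{s\},\mathbb R)>0\}$ is the predictable support of jumps, so their union is a thin predictable set.

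\textbf{Step 3: $[\Gamma^f]^c=0$.} This is the main obstacle. By construction $[M^f]^c=\int f'(Y_s)^2\,d[M]^c_s$. Moreover $Y_s=X_{s-}=X_s$ for $d[X]^c$-a.e.\ $s$, since the measure $d[X]^c$ charges no countable set of times. Because $[X]^c=[M]^c$, it therefore suffices to show $[f(X)]^c=\int f'(X_s)^2\,d[X]^c_s$ and $[f(X),M^f]^c=\int f'(X_s)^2\,d[X]^c_s$. Then, by bilinearity, $[\Gamma^f]^c=[f(X)]^c-2[f(X),M^f]^c+[M^f]^c=0$.

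For this we invoke Lowther's It\^o-type result \cite{Low}. Since $X=M+\Gamma$ is Dirichlet in his sense, $f$ is $C^1$ off the countable discontinuity set of $f'$, and the non-charging hypothesis $\int\mathbf 1_{\{X_s\notin\operatorname{diff}(f)\}}d[X]^c_s=0$ holds, his result gives $f(X)=\int f'(X_-)\,dM+(\text{process with zero continuous quadratic covariation with all continuous semimartingales})$ up to a term with $[\cdot]^c=0$. From this both identities follow, because $[\Gamma,M^c]^c=0$ by Cauchy--Schwarz as in the uniqueness lemma. If Lowther's result is not directly citable in that form, the fallback is a direct route. First approximate $f'$ by continuous $g_k$ agreeing with $f'$ outside shrinking neighbourhoods of its discontinuities. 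For $C^1$ primitives, compute $[f_k(X)]^c$ from discrete sums along the refining sequence, using the Taylor expansion $f_k(X_{\tau_{i+1}})-f_k(X_{\tau_i})\approx f_k'(X_{\tau_i})\Delta_iX$. Then pass to the limit using the non-charging condition and Lemma~\ref{triangle} to control $[f(X)-f_k(X)]^c$.

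Finally, $f(X)$ is c\`adl\`ag and adapted and $M^f_0=0$. Steps 1--3 therefore verify Definition~\ref{DefinSpecial} for $f(X)=M^f+\Gamma^f$, and by uniqueness this is its canonical decomposition.
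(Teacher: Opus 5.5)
Your Steps 1 and 2 match the paper's argument: predictability of $\widehat X=X_-+\Delta\Gamma$, the linear bound on the integrand, and the same jump computation. The difference is in Step 3.

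\textbf{How the paper handles Step 3.} The paper never applies Lowther's theorem to the nonsmooth $f$. It uses that $f'$ is of Baire class one to choose uniformly bounded polynomials $p_n\to f'$ pointwise on $[-m,m]$. It then applies only the $C^1$ change of variables to the primitives $f_n$ of $p_n$, which gives $[\Gamma^n]^c=0$ for $\Gamma^n=f_n(X)-M^n$. Finally it bounds $[\Gamma^f]^{c\,1/2}$ by $[(f-f_n)(X)]^{c\,1/2}+[M^f-M^n]^{c\,1/2}$. Here $[(f-f_n)(X)]^c_t=\int_0^t(f'-p_n)^2(X_s)\,d[X]^c_s$ by Lemma~6.2 of \cite{Collectanea}, which is where the non-charging hypothesis enters. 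Likewise $[M^f-M^n]^c_t=\int_0^t(f'-p_n)^2(\widehat X_s)\,d[M]^c_s$, and both terms vanish by dominated convergence.

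\textbf{How your route compares.} You instead take Lowther's decomposition $f(X)=\int f'(X_-)\,dM+V$ with $[V]^c=0$ directly for the nonsmooth $f$. That is legitimate in this paper's framework, since the introduction credits \cite{Low} with exactly this for Dirichlet processes under the non-charging condition. It makes the theorem a short corollary. The paper's route, by contrast, needs only the smooth case plus a quadratic-variation identity for primitives, and it shows where the non-charging condition is used.

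\textbf{A shortcut you can take.} Given Lowther's decomposition, you can also skip the bilinear expansion, which presupposes that $[f(X),M^f]$ exists along the refining sequence. Write $\Gamma^f=V+\bigl(\int f'(X_-)\,dM-M^f\bigr)$. The bracketed local martingale has continuous part $\int(f'(X_-)-f'(\widehat X))\,dM^c$. This is zero because $\widehat X_s=X_{s-}$ off a countable set and $d[M]^c$ has no atoms. The triangle inequality then gives $[\Gamma^f]^c_t=0$ at once.

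\textbf{A wording point.} The phrase ``$f$ is $C^1$ off the discontinuity set of $f'$'' is not meaningful, because that complement need not be open. What you actually use is that $f$ is locally Lipschitz and differentiable wherever $f'$ is continuous.

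\textbf{Your fallback does not close as written.} Suppose $g_k$ is continuous and agrees with $f'$ off open neighbourhoods $U_k$ of the discontinuity set. The natural estimate is $[f(X)-f_k(X)]^c_t\le C\int_0^t\mathbf 1_{U_k}(X_s)\,d[X]^c_s$, whose limit is $C\int_0^t\mathbf 1_{\bigcap_kU_k}(X_s)\,d[X]^c_s$. The set $\bigcap_kU_k$ is typically much larger than the discontinuity set; it is a dense $G_\delta$ when the discontinuities are dense. All of its points outside that countable set are differentiability points of $f$, and even discontinuity points of $f'$ can be differentiability points of $f$. For a Dirichlet process there is no occupation-time formula. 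So the hypothesis, which only concerns $\{X_s\notin\operatorname{diff}(f)\}$, does not force this limit to vanish. What you need is a uniformly bounded approximation $g_k\to f'$ that converges pointwise everywhere. The Baire-one property provides exactly that, and it is the paper's choice.
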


\begin{proof}
Write
\[
        \widehat X_s:=X_{s-}+\Delta\Gamma_s.
\]
Since \(X_{-}\) is predictable and the jump times of \(\Gamma\) are carried by
a thin predictable set with \(\mathcal F_{s-}\)-measurable jumps,
\(\widehat X\) is predictable. Let
\[
        \Phi(s,x):=f(\widehat X_s+x)-f(\widehat X_s).
\]
After localization, we may assume that \(X\), \(M\), and \(\Gamma\) are
bounded on \([0,t]\), and that \(f'\) is bounded on the relevant compact
interval. Therefore, on the relevant compact interval, there exists
\(C>0\) such that, whenever both
\[
        \widehat X_s
        \quad\text{and}\quad
        \widehat X_s+x
\]
belong to this interval,
\[
        |\Phi(s,x)|
        \le C|x|.
\] 
Since \(M^c\) is a
continuous local martingale,
\[
        \int_0^\cdot f'(\widehat X_s)\,dM^c_s
\]
is a local martingale, and
\[
        \int_0^\cdot\int_{\mathbb R}
        \Phi(s,x)(\mu^M-\nu^M)(ds,dx)
\]
is a well-defined purely discontinuous local martingale. Hence \(M^f\)
is a local martingale.

Set
\[
        \Gamma^f:=f(X)-M^f.
\]

We first compute the jumps of \(\Gamma^f\). Put
\[
        u_s:=\Delta M_s,
        \qquad
        v_s:=\Delta\Gamma_s,
        \qquad
        y_s:=X_{s-}.
\]
Then
\[
        \widehat X_s=y_s+v_s,
        \qquad
        X_s=\widehat X_s+u_s.
\]
Thus
\[
        \Delta f(X)_s
        =
        f(\widehat X_s+u_s)-f(y_s).
\]
On the other hand, since the first integral in \(M^f\) is continuous,
\[
\begin{aligned}
\Delta M^f_s
&=
        \Phi(s,u_s)
        -
        \int_{\mathbb R}\Phi(s,x)\nu^M(\{s\},dx)
\\
&=
        f(\widehat X_s+u_s)-f(\widehat X_s)
        -
        \int_{\mathbb R}\Phi(s,x)\nu^M(\{s\},dx).
\end{aligned}
\]
Therefore
\[
\begin{aligned}
\Delta\Gamma^f_s
&=
        \Delta f(X)_s-\Delta M^f_s
\\
&=
        f(\widehat X_s)-f(y_s)
        +
        \int_{\mathbb R}\Phi(s,x)\nu^M(\{s\},dx)
\\
&=
        f(X_{s-}+\Delta\Gamma_s)-f(X_{s-})
        +
        \int_{\mathbb R}
        \Phi(s,x)\nu^M(\{s\},dx).
\end{aligned}
\]
The first term is \(\mathcal F_{s-}\)-measurable by the defining jump
condition on \(\Gamma\). The second term is \(\mathcal F_{s-}\)-measurable
because \(\nu^M\) is predictable and the integrand is predictable. Hence
\[
        \Delta\Gamma^f_s\in\mathcal F_{s-}.
\]
The jumps of \(\Gamma^f\) are supported on the union of the jump times of
\(\Gamma\) and the predictable atoms of \(\nu^M\), hence on a thin
predictable set.

It remains to prove that
\[
        [\Gamma^f]^c_t=0.
\]

Since \(f'\) is a locally bounded Baire class \(1\) function, there
exist polynomials \(p_n\) such that
\[
        p_n(x)\to f'(x)
\]
for every \(x\in[-m,m]\).
Moreover, since \(f'\) is bounded on \([-m,m]\), the approximation may
be chosen so that
\[
        \sup_n\sup_{|x|\le m}|p_n(x)|<\infty.
\]
Define
\[
        f_n(x):=f(-m)+\int_{-m}^x p_n(u)\,du,
\]
and
\[
        \Phi_n(s,x):=f_n(\widehat X_s+x)-f_n(\widehat X_s).
\]

Let \(M^n\) be defined as \(M^f\) with \(f',\Phi\) replaced by
\(p_n,\Phi_n\), namely
\[
        M^n_t
        :=
        \int_0^t p_n(\widehat X_s)\,dM^c_s
        +
        \int_0^t\int_{\mathbb R}
        \Phi_n(s,x)(\mu^M-\nu^M)(ds,dx),
\]
and set
\[
        \Gamma^n:=f_n(X)-M^n.
\]

\medskip

\textit{Step 1: smooth case.}
Since \(f_n\in C^1\), the standard change-of-variables formula applied to
\(X=M+\Gamma\), together with \([\Gamma]^c=0\), yields a decomposition
in which the residual satisfies
$
        [\Gamma^n]^c_t=0.
$

\medskip

\textit{Step 2: decomposition of the difference.}
Write
\[
        \Gamma^f
        =
        \bigl(f(X)-f_n(X)\bigr)
        -
        (M^f-M^n)
        +
        \Gamma^n.
\]

By the triangle inequality for continuous quadratic variation,
\[
\begin{aligned}
[\Gamma^f]_t^{c\,1/2}
&\le
[f(X)-f_n(X)]_t^{c\,1/2}
+
[M^f-M^n]_t^{c\,1/2}
+
[\Gamma^n]_t^{c\,1/2}.
\end{aligned}
\]
Since \([\Gamma^n]^c_t=0\), it suffices to show that the first two terms
converge to zero.

\medskip

\textit{Step 3: control of \(f(X)-f_n(X)\).}
Set \(h_n:=f-f_n\). Then \(h_n\) is the primitive of \(f'-p_n\).
By Lemma~6.2 in \cite{Collectanea} and the non-charging assumption,
\[
[h_n(X)]^c_t
=
\int_0^t
\bigl(f'(X_s)-p_n(X_s)\bigr)^2\,d[X]^c_s.
\]

After localization, assume \(|X_s|\le m\) and
\[
|f'(x)|\le K,
\qquad
|p_n(x)|\le K
\quad (|x|\le m).
\]
Then
\[
|f'(X_s)-p_n(X_s)|^2 \le 4K^2,
\]
and since \(p_n(x)\to f'(x)\) for every \(x\in[-m,m]\), we have, for
every \(\omega\) and every \(s\le t\) on the localized set,
\[
        (f'(X_s(\omega))-p_n(X_s(\omega)))^2\to0.
\]
Since \(d[X]^c_s\) is a finite measure on \([0,t]\), dominated convergence
gives
\[
        [f(X)-f_n(X)]^c_t=[h_n(X)]^c_t\longrightarrow0.
\]

\medskip

\textit{Step 4: control of the martingale difference.}
We have
\[
M^f_t-M^n_t
=
\int_0^t (f'(\widehat X_s)-p_n(\widehat X_s))\,dM^c_s
+
\int_0^t\int_{\mathbb R}
(\Phi-\Phi_n)(s,x)(\mu^M-\nu^M)(ds,dx).
\]
The second term is purely discontinuous, hence
\[
[M^f-M^n]^c_t
=
\int_0^t
(f'(\widehat X_s)-p_n(\widehat X_s))^2\,d[M]^c_s.
\]

Since \([M]^c\) is continuous, the measure \(d[M]^c_s\) has no atoms.
The jump set of the càdlàg process \(\Gamma\) is at most countable on
\([0,t]\). Hence \(d[M]^c_s\) does not charge the set
\(\{s:\Delta\Gamma_s\ne0\}\). Consequently,
\[
        \widehat X_s=X_{s-}+\Delta\Gamma_s=X_{s-}
        \quad d[M]^c_s\text{-a.e.}
\]
Moreover, \(d[M]^c_s=d[X]^c_s\), since \([\Gamma]^c=0\) and the
continuous quadratic variation of \(X\) is carried by the continuous
martingale part of \(M\). Therefore the same dominated convergence
argument gives
\[
        [M^f-M^n]^c_t \longrightarrow 0.
\]

\medskip

\textit{Step 5: conclusion.}
Combining the estimates,
$
[\Gamma^f]_t^{c\,1/2}\to0,
$
hence
$
        [\Gamma^f]^c_t=0.
$
\end{proof}

The preceding theorem applies to general Special--Dirichlet
processes. For special semimartingales, Lowther's non-charging
condition is automatically satisfied, yielding the following
immediate consequence.

\begin{corollary}
Let \(Z\) be a special semimartingale on \([0,t]\), and let \(f \in C^1(\mathbb{R})\).
Then \(f(Z)\) is a special Dirichlet process.
\end{corollary}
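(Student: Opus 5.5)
The plan is to deduce the corollary from Theorem~\ref{thm:SD-closure}. That requires two checks: that a special semimartingale is a Special--Dirichlet process in the sense of Definition~\ref{DefinSpecial}, and that the non-charging hypothesis holds trivially for $f\in C^1$.

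\textbf{Step 1: $Z$ is Special--Dirichlet.} Let $Z=Z_0+M+A$ be the canonical decomposition, with $M$ a local martingale, $M_0=0$, and $A$ predictable of finite variation with $A_0=0$. Set $\Gamma:=Z_0+A$; the constant $Z_0$ is $\mathcal F_0$-measurable and affects nothing below.
\begin{itemize}
\item Since $A$ has finite variation, $[A]^c=0$, hence $[\Gamma]^c_t=0$.
\item Since $A$ is predictable and c\`adl\`ag, its jump set $\{\Delta A\neq 0\}$ is a thin predictable set. It is exhausted by predictable times $(T_k)$, by the standard result that the jumps of a predictable c\`adl\`ag process occur at predictable times.
\item For each $k$, $\Delta A_{T_k}$ is $\mathcal F_{T_k-}$-measurable, because $A_{T_k}$ and $A_{T_k-}$ are both $\mathcal F_{T_k-}$-measurable for a predictable process at a predictable time. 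This is the required condition $\Delta\Gamma_s\in\mathcal F_{s-}$.
\end{itemize}
So $Z=M+\Gamma$ satisfies Definition~\ref{DefinSpecial}.

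\textbf{Step 2: hypotheses on $f$.} If $f\in C^1(\mathbb R)$, then $f'$ is continuous, so it is locally bounded with an empty set of discontinuities. It coincides with the upper Dini derivative everywhere, and $f$ is its primitive. Moreover $\operatorname{diff}(f)=\mathbb R$, so $\mathbf 1_{\{Z_s\notin\operatorname{diff}(f)\}}\equiv0$ and the integral condition holds trivially. Theorem~\ref{thm:SD-closure} then gives that $f(Z)$ is Special--Dirichlet, with the explicit $M^f$ and $\Gamma^f$ described there.

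\textbf{Main obstacle.} The only delicate point is the jump-structure claim in Step 1, namely that a predictable c\`adl\`ag finite variation process has jumps at predictable times with $\mathcal F_{T-}$-measurable sizes. I would cite the standard result (e.g.\ Jacod--Shiryaev, I.2.24 and I.2.4) rather than reprove it.
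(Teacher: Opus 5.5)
Your proposal is correct and follows the same route as the paper: both apply Theorem~\ref{thm:SD-closure}. You also make explicit the step the paper leaves implicit, namely that the canonical decomposition $Z=Z_0+M+A$ exhibits $Z$ as Special--Dirichlet by predictability of $A$, and you rightly note that for $f\in C^1$ the non-charging condition holds trivially because $\operatorname{diff}(f)=\mathbb R$, so the paper's appeal to Lowther's result is not needed.
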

\begin{proof}
This follows from the above theorem and the fact that 
\[
        \int_0^t
        \mathbf 1_{\{X_s\notin\operatorname{diff}(f)\}}\,d[X]^c_s=0
\]
is true whenever $X$ is a semimartingale (as was shown in \cite{Low}).
\end{proof}

\subsection{Stability under Transformations}
We now combine the structural stability result of Section 4 with the
transformation theorem above. The goal is to show that convergence of
the underlying processes propagates through the transformation and
remains visible at the level of the canonical decomposition.
As mentioned earlier, for a special Dirichlet process X let $\mathfrak{M}(X)$ denote its unique local martingale component and let $\mathfrak{\Gamma}(X)$ denote its unique predictable component. We shall need two lemmas to prove the theorem in this section. The first lemma shows that convergence in the \(J_1\)-topology,
combined with vanishing quadratic variation of the error process,
implies convergence of the corresponding left limits.

\begin{lemma}\label{lem:QVJ1}
Let \(X^n\) and \(X\) be real-valued cadlag functions on \([0,t]\).
Assume that
\[
        X^n \to X
        \quad\text{in the } J_1\text{-topology}
\]
and that
\[
        [X^n-X]_t \to 0 .
\]
Then, for every \(s\in(0,t]\),
\[
        X^n_{s-}\to X_{s-}.
\]
\end{lemma}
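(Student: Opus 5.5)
The plan is to use the \(J_1\)-convergence to transport left limits of \(X^n\) to left limits of \(X\) at slightly perturbed times, and then use the vanishing quadratic variation of the error to rule out jump mismatches. Throughout, write \(Y^n:=X^n-X\). By \eqref{quad}, every jump of \(Y^n\) satisfies \((\Delta Y^n_u)^2\le [Y^n]_t\), so the first step is the elementary consequence
\[
        \sup_{u\le t}|\Delta X^n_u-\Delta X_u|
        \le [X^n-X]_t^{1/2}\longrightarrow 0 .
\]
This is the only information the quadratic variation hypothesis will supply, and it is exactly what \(J_1\)-convergence alone lacks: \(J_1\) permits jump times to move, but not jumps to be created or destroyed in a way visible to the pointwise jump difference.

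Next I would invoke the definition of \(J_1\)-convergence. There are increasing homeomorphisms \(\lambda_n\) of \([0,t]\) with \(\sup_u|\lambda_n(u)-u|\to0\) and \(\sup_u|X^n_{\lambda_n(u)}-X_u|\to0\). Since \(\lambda_n\) is continuous and increasing, the same uniform bound passes to left limits: \(\sup_u|X^n_{\lambda_n(u)-}-X_{u-}|\to0\). Fix \(s\in(0,t]\) and set \(u_n:=\lambda_n^{-1}(s)\), so that \(u_n\to s\) and
\[
        X^n_{s-}-X_{u_n-}\to0 .
\]
It therefore suffices to show that \(X_{u_n-}\to X_{s-}\). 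I would argue along subsequences, splitting into the cases \(u_n<s\), \(u_n=s\) and \(u_n>s\). In the first two cases the conclusion is immediate: if \(u_n\uparrow s\) strictly, left limits converge to \(X_{s-}\) by the càdlàg property, and if \(u_n=s\) there is nothing to prove. If \(u_n\downarrow s\) strictly, then \(X_{u_n-}\to X_s\), so the only problematic situation is when \(d:=\Delta X_s\ne0\).

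This last case is the main obstacle, and I would handle it by contradiction using the jump estimate above. Suppose \(u_n>s\) along a subsequence and \(d\ne0\). The jump of \(X\) at \(s\) is mapped by \(J_1\) to a jump of \(X^n\) at \(r_n:=\lambda_n(s)\). Since \(\lambda_n(u_n)=s\) and \(\lambda_n\) is increasing, we have \(r_n<s\), and \(r_n\to s\). Moreover
\[
        \Delta X^n_{r_n}=X^n_{\lambda_n(s)}-X^n_{\lambda_n(s)-}\to X_s-X_{s-}=d .
\]
On the other hand, because \(X\) is càdlàg, it has only finitely many jumps of size exceeding \(|d|/2\). Hence there is a punctured left neighbourhood \((s-\delta,s)\) on which \(|\Delta X|<|d|/2\). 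For large \(n\) this gives \(|\Delta Y^n_{r_n}|\ge|\Delta X^n_{r_n}|-|\Delta X_{r_n}|>|d|/2-o(1)\), contradicting \(\sup_u|\Delta Y^n_u|\to0\). Thus \(u_n>s\) can occur along a subsequence only when \(\Delta X_s=0\), in which case \(X_{u_n-}\to X_s=X_{s-}\) anyway.

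Combining the three cases, every subsequence has a further subsequence along which \(X^n_{s-}\to X_{s-}\), which proves the lemma. The delicate points to check carefully are two: the passage of the uniform \(J_1\) bound to left limits, which holds because \(\lambda_n\) is a continuous bijection, and the strict inequality \(r_n<s\) in the last case, which is what places the displaced jump at a time where \(X\) has no large jump.
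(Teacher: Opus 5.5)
Your proof is correct, but it takes a different route from the paper's.

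\textbf{The paper's route.} It never works with the time changes explicitly. It quotes a graph property of $J_1$ limits: if $u_n\to s$, every cluster point of $X^n(u_n)$ lies in the segment between $X_{s-}$ and $X_s$. It applies this both to $X^n_{s-}$ (approximated by $X^n(u_n)$ with $u_n\uparrow s$) and to $X^n_s$. It then uses only the single-time fact $\Delta(X^n-X)_s\to0$: two cluster points $a,b$ of that segment with $b-a=X_s-X_{s-}$ must be its endpoints, so $a=X_{s-}$. This is very short, but it rests on the cited $J_1$ property, which the paper does not prove.

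\textbf{Your route.} You derive everything from the definition of $J_1$. You transport $X^n_{s-}$ to $X_{u_n-}$ with $u_n=\lambda_n^{-1}(s)$. You then isolate the only bad configuration: the jump of $X$ at $s$ being reproduced by $X^n$ strictly before $s$. You exclude it with the uniform bound $\sup_u|\Delta(X^n-X)_u|\le[X^n-X]_t^{1/2}$ at the moving time $r_n=\lambda_n(s)$, together with finiteness of the large jumps of $X$. This is self-contained and shows exactly which $J_1$ behaviour the quadratic variation hypothesis rules out.

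\textbf{A simplification of your third case.} You can close it with the fixed-time information alone, as the paper does. If $u_n\downarrow s$ strictly, then $\Delta X^n_s=X^n_{\lambda_n(u_n)}-X^n_{\lambda_n(u_n)-}$ is within $o(1)$ of $X_{u_n}-X_{u_n-}$, and this tends to $X_s-X_s=0$. Hence $\Delta(X^n-X)_s\to0$ already forces $\Delta X_s=0$. Neither the uniform jump bound nor the finiteness of jumps larger than $|\Delta X_s|/2$ is then needed.
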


\begin{proof}
Fix \(s\in(0,t]\), and write
\[
        L:=X_{s-},\qquad R:=X_s .
\]
We use the following standard consequence of \(J_1\)-convergence: if
\(x_n\to x\) in the \(J_1\)-topology and \(u_n\to s\), then every cluster
point of \(x_n(u_n)\) belongs to the completed graph segment
\[
        [[x(s-),x(s)]]
        :=
        \{\theta x(s-)+(1-\theta)x(s):\theta\in[0,1]\}.
\]

Put
\[
        Y^n:=X^n-X .
\]
Since the quadratic variation contains the squared jumps,
\[
        \bigl(\Delta Y^n_s\bigr)^2\le [Y^n]_t .
\]
Hence
$
        \Delta Y^n_s\to0 .
$
Equivalently,
\[
        \Delta X^n_s-\Delta X_s\to0,
\]
and therefore
\[
        X^n_s-X^n_{s-}\to R-L .
\]

Let \(a\) be an arbitrary cluster point of \(X^n_{s-}\). Passing to a
subsequence, we may assume that
$
        X^n_{s-}\to a .
$
For each \(n\), choose
\[
        u_n\in (s-1/n,s)\cap[0,t]
\]
such that
\[
        |X^n(u_n)-X^n_{s-}|<1/n .
\]
This is possible by the existence of the left limit \(X^n_{s-}\).
Then \(u_n\to s\), and along the chosen subsequence
\[
        X^n(u_n)\to a .
\]
By the completed graph consequence of \(J_1\)-convergence,
$
        a\in [[L,R]] .
$

Passing to a further subsequence if necessary, assume also that
$
        X^n_s\to b .
$
Again by the completed graph consequence, now with \(u_n=s\), we have
$
        b\in [[L,R]] .
$
On the other hand,
\[
        b-a
        =
        \lim_{n\to\infty} (X^n_s-X^n_{s-})
        =
        R-L .
\]
Since \(a,b\in [[L,R]]\) and \(b-a=R-L\), it follows in the real-valued
case that
\[
        a=L,\qquad b=R .
\]
Thus every cluster point of \(X^n_{s-}\) equals \(X_{s-}\). Consequently,
$
        X^n_{s-}\to X_{s-}.
$
\end{proof}
The second lemma, taken from \cite{CompJump}, provides a threshold
isolation principle: for arbitrarily small jump-size thresholds, one can
find a neighborhood around the threshold which, with probability tending
to one, eventually contains no jumps of either \(X\) or \(X^n\).
\begin{lemma}[Threshold Isolation Principle]
\label{lem:thres}
Let $X$ and $\{X^n\}_{n\ge1}$ be c\`adl\`ag processes on $[0,t]$ such that
\[
[X^n-X]_t \to 0
\qquad \text{in probability}.
\]
Fix $r>0$, and for each $k\ge1$ define
\[
\delta(k,r):=\frac{r}{2^{k+2}},
\qquad
L(r,k):=r\Bigl(1-\frac{3}{2^{k+2}}\Bigr),
\]
and
\[
A_k(r)
:=
\Bigl\{
\omega:
\bigl||\Delta X_s(\omega)|-L(r,k)\bigr|>\delta(k,r)
\text{ for all } s\le t
\Bigr\}.
\]
Then
\[
\mathbb P\Bigl(\bigcup_{k\ge1}A_k(r)\Bigr)=1.
\]

Moreover, for each fixed $k\ge1$, if we define
\[
B_n(k,r)
:=
\Bigl\{
\sup_{s\le t}|\Delta(X^n-X)_s|<\delta(k,r)
\Bigr\},
\]
then
$
\lim_{n\to\infty}\mathbb P\bigl(B_n(k,r)\bigr)=1.
$
On the event $A_k(r)\cap B_n(k,r)$, the classification of jumps relative to the threshold
$L(r,k)$ is preserved, i.e.
\[
|\Delta X_s|\le L(r,k)-\delta(k,r)
\quad\Longrightarrow\quad
|\Delta X^n_s|<L(r,k),
\]
and
\[
|\Delta X_s|\ge L(r,k)+\delta(k,r)
\quad\Longrightarrow\quad
|\Delta X^n_s|>L(r,k),
\]
for all $s\le t$.

In particular, on $A_k(r)\cap B_n(k,r)$,
\[
1_{\{|\Delta X^n_s|\le L(r,k)\}}
\le
1_{\{|\Delta X_s|\le L(r,k)\}}
\qquad \text{for all } s\le t,
\]
and therefore
\[
\sum_{s\le t} (\Delta X^n_s)^2 1_{\{|\Delta X^n_s|\le L(r,k)\}}
\le
2[X^n-X]_t
+
2\sum_{s\le t} (\Delta X_s)^2 1_{\{|\Delta X_s|\le L(r,k)\}}.
\]
\end{lemma}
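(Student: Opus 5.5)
The proof splits into three parts, matching the structure of the statement: the almost sure statement about $\bigcup_k A_k(r)$, the probability estimate for $B_n(k,r)$, and the deterministic comparison on $A_k(r)\cap B_n(k,r)$.

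\emph{First assertion.} Fix $\omega$ in the full-measure set where $X(\omega)$ is càdlàg. On $[0,t]$ it then has only finitely many jumps with $|\Delta X_s|>r/4$. The intervals
\[
I_k:=[L(r,k)-\delta(k,r),\,L(r,k)+\delta(k,r)]
\]
are pairwise disjoint. Indeed, $L(r,k)+\delta(k,r)=r(1-2^{-k-1})$ and $L(r,k+1)-\delta(k+1,r)=r(1-2^{-k-1})$, so consecutive intervals meet at most at an endpoint. This boundary coincidence must be handled by taking strict inequalities at one end, or simply by noting that a single value lies in at most two of the $I_k$. All the $I_k$ sit inside $[r/2,r]$. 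Each of the finitely many values $|\Delta X_s|\ge r/2$ can therefore meet only finitely many (at most two) of the $I_k$. Since there are infinitely many $k$, some $k$ has $I_k$ free of all $|\Delta X_s(\omega)|$. The non-strict inequality in the definition of $A_k(r)$ is harmless because $\omega\in A_k(r)$ requires exactly that $|\Delta X_s|\notin I_k$. Hence $\omega\in\bigcup_k A_k(r)$.

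\emph{Second assertion.} For every $s\le t$ we have $\sup_{s\le t}(\Delta(X^n-X)_s)^2\le[X^n-X]_t$, because the quadratic variation in \eqref{quad} contains the squared jumps. Hence
\[
\mathbb P\bigl(B_n(k,r)^c\bigr)\le\mathbb P\bigl([X^n-X]_t\ge\delta(k,r)^2\bigr)\to0 .
\]

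\emph{Classification.} On $A_k(r)\cap B_n(k,r)$ the argument is deterministic via the triangle inequality, using $|\Delta X^n_s-\Delta X_s|<\delta$:
\[
|\Delta X_s|\le L-\delta\ \Longrightarrow\ |\Delta X^n_s|<L,\qquad |\Delta X_s|\ge L+\delta\ \Longrightarrow\ |\Delta X^n_s|>L .
\]
On $A_k(r)$ every jump falls into one of these two cases. Therefore $|\Delta X^n_s|\le L$ forces the first case, which gives the indicator inequality.

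\emph{Final bound.} Use $(\Delta X^n_s)^2\le 2(\Delta(X^n-X)_s)^2+2(\Delta X_s)^2$. Multiply by $1_{\{|\Delta X^n_s|\le L\}}\le 1_{\{|\Delta X_s|\le L\}}$, sum over $s\le t$, and bound $\sum_{s\le t}(\Delta(X^n-X)_s)^2\le[X^n-X]_t$.

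The main delicate point is the first assertion. Disjointness of the $I_k$ has to be checked carefully because of the touching endpoints, and it must be combined with finiteness of the jumps above $r/2$. The remaining steps are routine.
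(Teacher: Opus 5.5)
Your proof is correct. The paper gives no proof of this lemma; it imports it from \cite{CompJump}, so there is nothing in-paper to compare with. Your argument is the natural one: each path has finitely many jumps of size at least $r/2$, each meeting at most two of the closed intervals $I_k=[r(1-2^{-k}),\,r(1-2^{-k-1})]$; the bound $\sup_{s\le t}(\Delta(X^n-X)_s)^2\le[X^n-X]_t$ comes from \eqref{quad}; and the rest follows from the triangle inequality and $(a+b)^2\le 2a^2+2b^2$.

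There are two wording slips. First, the $I_k$ are not pairwise disjoint, since consecutive ones share an endpoint. Your ``at most two'' remark is the right fix, while ``taking strict inequalities at one end'' is not available because $A_k(r)$ is given. Second, the inequality defining $A_k(r)$ is strict, not non-strict, and that is exactly why the excluded set is the closed $I_k$.

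More usefully, your argument proves more than you state. Every path lies in $A_k(r)$ for all but finitely many $k$, so $\mathbb P(A_k(r))\ge\mathbb P\bigl(\bigcap_{j\ge k}A_j(r)\bigr)\to 1$. Since the $A_k(r)$ are not nested, $\mathbb P\bigl(\bigcup_k A_k(r)\bigr)=1$ alone would not give a single $k$ with $\mathbb P(A_k(r))$ close to one. That single-$k$ form is precisely how the lemma is invoked in the proof of Theorem~\ref{C1}, so the stronger conclusion is worth recording.
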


With these tools we can now prove the following.
\begin{thm}\label{C1}
Let $f\in C^1$. Let $\{X^n\}_n$ be special Dirichlet processes such that for each $n$, $X^n$ and $X$ have quadratic variations along the same refining sequence, that $X^n\xrightarrow{J_1}X$ in probability, $[X^n-X]_t\xrightarrow{\P}0$. Then
\\$[\mathfrak{\Gamma}(f(X^n))-\mathfrak{\Gamma}(f(X))]_t \xrightarrow{\P} 0$ and $\mathfrak{M}(f(X^n))\xrightarrow{ucp}\mathrm{M}(f(X))$ as $n\to \infty$.
\end{thm}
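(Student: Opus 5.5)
The plan is to reduce everything to convergence in quadratic variation of the transformed processes, and then invoke Lemma~\ref{lem:SD-decomposition-stability}. By Theorem~\ref{thm:SD-closure} (or its Corollary, since $f\in C^1$ makes the non-charging condition automatic), both $f(X^n)$ and $f(X)$ are Special--Dirichlet, so their canonical components $\mathfrak M(\cdot)$ and $\mathfrak\Gamma(\cdot)$ are well defined. If we show
\[
[f(X^n)-f(X)]_t\xrightarrow{\mathbb P}0,
\]
then Lemma~\ref{lem:SD-decomposition-stability} gives $[\mathfrak\Gamma(f(X^n))-\mathfrak\Gamma(f(X))]_t\to0$ and $[\mathfrak M(f(X^n))-\mathfrak M(f(X))]_t\to0$ in probability. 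The Burkholder--Davis--Gundy inequality, in its Lenglart-type form for local martingales with $N_0=0$, then upgrades the latter to $\sup_{s\le t}|\mathfrak M(f(X^n))_s-\mathfrak M(f(X))_s|\to0$ in probability, that is, ucp convergence.

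To prove $[f(X^n)-f(X)]_t\to0$, split the quadratic variation into its continuous part and its jump part. First localize so that $X^n$ and $X$ take values in a compact $[-m,m]$ on events of probability close to one; $J_1$ convergence in probability gives tightness of $\sup_s|X^n_s|$. Then $f'$ is uniformly continuous there. For the continuous part, write $f(X^n)-f(X)=\int_0^1 f'(X+\theta(X^n-X))\,d\theta\,(X^n-X)$. Using the Itô/Lowther-type formula $[g(Y)]^c=\int g'(Y)^2d[Y]^c$ for the pair along the common refining sequence, we get
\[
[f(X^n)-f(X)]^{c\,1/2}_t\le \Bigl(\int_0^t (f'(X^n_s)-f'(X_s))^2 d[X]^c_s\Bigr)^{1/2}+\|f'\|_{\infty,m}[X^n-X]^{c\,1/2}_t .
\]
The second term vanishes by hypothesis. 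For the first term, $J_1$ convergence gives $X^n_s\to X_s$ at all continuity points of $X$, which is $d[X]^c$-a.e. Dominated convergence then finishes it, after passing to almost surely convergent subsequences (subsequence–subsubsequence argument).

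The jump part $\sum_{s\le t}(f(X^n_s)-f(X^n_{s-})-f(X_s)+f(X_{s-}))^2$ is the main obstacle, and this is where Lemma~\ref{lem:QVJ1} and Lemma~\ref{lem:thres} enter. Fix $r>0$ and $k$ with $\mathbb P(A_k(r))$ close to $1$, and work on $A_k(r)\cap B_n(k,r)$. \emph{Large jumps} (with $|\Delta X_s|>L(r,k)$) are finitely many, located at fixed times of $X$, and the threshold lemma guarantees that $X^n$ has its large jumps exactly at those same times. At each such $s$, Lemma~\ref{lem:QVJ1} gives $X^n_{s-}\to X_{s-}$, and $\Delta X^n_s-\Delta X_s\to0$, so $X^n_s\to X_s$. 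By continuity of $f$ the finitely many summands tend to zero. \emph{Small jumps} are handled by the mean value bound $|f(b)-f(a)|\le K|b-a|$, which gives
\[
(\Delta f(X^n)_s-\Delta f(X)_s)^2\le 2K^2\bigl((\Delta X^n_s)^2+(\Delta X_s)^2\bigr).
\]
The final inequality of Lemma~\ref{lem:thres} then bounds the total by $C\bigl([X^n-X]_t+\sum_s(\Delta X_s)^2\mathbf 1_{\{|\Delta X_s|\le L(r,k)\}}\bigr)$. The last sum tends to zero as $r\downarrow0$ since $\sum(\Delta X_s)^2<\infty$.

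The remaining work is careful bookkeeping. Jumps of $X^n$ at times where $X$ does not jump are automatically small, since there $|\Delta X^n_s|=|\Delta(X^n-X)_s|<\delta$. Sending $n\to\infty$ first and then $r\downarrow0$ yields $[f(X^n)-f(X)]_t\to0$ in probability, which completes the reduction.
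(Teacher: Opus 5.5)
Your argument is correct, and its skeleton matches the paper's. Both reduce to $[f(X^n)-f(X)]_t\xrightarrow{\mathbb P}0$ and then apply Lemma~\ref{lem:SD-decomposition-stability} and BDG. Both treat the jumps the same way: finitely many large jumps via Lemma~\ref{lem:QVJ1} and continuity of $f$, small jumps via the local Lipschitz bound.

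The genuine difference is the non-jump part. The paper uses Lowther's decomposition $f(X)=\int f'(X_-)\,dM+V$, $f(X^n)=\int f'(X^n_-)\,dM^n+V^n$. It then controls the \emph{full} quadratic variation of $\int f'(X_-)\,dM-\int f'(X^n_-)\,dM^n$ by dominated convergence against $d[M]$ and $d[M-M^n]$. This needs $X^n_{s-}\to X_{s-}$ at every $s$, because $d[M]$ has atoms, and a second application of Lemma~\ref{lem:SD-decomposition-stability} (to $X^n,X$) to get $[M^n-M]_t\to0$. The purely discontinuous $V^n-V$ is then bounded by this term plus $[f(X^n)-f(X)]^d_t$.

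You instead split $[f(X^n)-f(X)]$ into continuous and jump parts at the outset. Since $d[X]^c$ is atomless, you only need $X^n_s\to X_s$ at continuity points of $X$, which follows directly from $J_1$, and $[X^n-X]^c_t\to0$, which follows directly from the hypothesis. So your route is somewhat leaner. (In both versions the threshold lemma is actually dispensable. Partitioning jump times by $|\Delta X_s|\le\rho$ versus $|\Delta X_s|>\rho$ and using $(\Delta X^n_s)^2\le 2(\Delta X_s)^2+2(\Delta(X^n-X)_s)^2$ already gives the small-jump estimate.)

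The one step you should justify differently is the continuous-part inequality. The mean-value identity does not deliver it: under $J_1$ the difference $X^n-X$ is not uniformly small, and $\int_0^1 f'(X+\theta(X^n-X))\,d\theta$ is merely continuous, so you have no control on its continuous quadratic variation. The one-dimensional rule $[g(Y)]^c=\int g'(Y)^2\,d[Y]^c$ does not deliver it either, since $f(X^n)-f(X)$ is not a function of a single process.

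What you need is a covariation version, and it comes from the same decomposition the paper uses:
\begin{itemize}
\item Because $[V]^c=[V^n]^c=0$, Cauchy--Schwarz for continuous covariations gives $[f(X^n)-f(X)]^c_t=[\int f'(X^n_-)\,dM^n-\int f'(X_-)\,dM]^c_t$.
\item Write the integrand as $(f'(X^n_-)-f'(X_-))\,dM+f'(X^n_-)\,d(M^n-M)$.
\item Use $[M]^c=[X]^c$ and $[M^n-M]^c=[X^n-X]^c$, the latter because $[\Gamma^n-\Gamma]^c=0$.
\item Replace left limits by values off countable sets, which continuous quadratic variations do not charge.
\end{itemize}
This yields exactly your bound, so the step is repairable rather than a real gap.
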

\begin{proof}
We first establish that $[f(X^n)-f(X)]_t\xrightarrow{\P}0$. According to Lemma \ref{lem:SD-decomposition-stability} it then follows that $[\mathfrak{M}(f(X^n))-\mathfrak{M}(f(X))]_t\xrightarrow{\P}0$ and $[\mathfrak{\Gamma}(f(X^n))-\mathfrak{\Gamma}(f(X))]_t\xrightarrow{\P}0$. By localization and the Burkholder-David-Gundy inequality it then follows that $\mathfrak{M}(f(X^n))\xrightarrow{ucp}\mathfrak{M}(f(X))$. 

Let $\{n_j\}_{j\in\N}$ be an arbitrary subsequence and extract a further subsequence $\{n_{j_l}\}_{l\in\N}$ such that both
$
[X^{n_{j_l}}-X]_t\to 0
$ a.s.
and
$
X^{n_{j_l}}\xrightarrow{J_1}X
$ a.s.. according to Lemma \ref{lem:QVJ1} we conclude that
\[
X^{n_{j_l}}_{s-}\xrightarrow{a.s.}X_{s-}
\qquad\text{for every } s\in[0,t].
\]
Since the sequence $\{n_j\}_{j\in\N}$ is arbitrary, it suffices to show that $[f(X^{n_{j_l}})-f(X)]_t\xrightarrow{\P}0$ as $l\to \infty$, so by relabelin we may assume that $X^{n}_{s-}\xrightarrow{a.s.}X_{s-}$. If we let 
$$B_R=\left\{\sup_n(X^n)^*_t\vee X^*_t\le R\right\}$$ 
then 
$$\P\left(\cup_{R} B_R\right)=\lim_{R\to\infty}\P(B_R)=1,$$ 
so we may assume that $X,\Delta X$, $\{X^n\}_n$ and $\{\Delta X^n\}_n$ are all uniformly bounded by the constant $R$.  We will let $L=\sup_{x\in[-R-a,R+a]} |f'(x)|$ (which must be finite as cadlag functions are bounded on compacts).
According to Theorem 2.1 in \cite{Low}	
\begin{align*}
f(X_s)=\int_0^sf'(X_{u-})dM_u+V_s
\end{align*}
and similarly
\begin{align*}
f(X^n_s)=\int_0^sf'(X^n_{u-})dM^n_u+V^n_s
\end{align*}

 Let $X^n=M^n+\Gamma^n$ and $X=M+\Gamma$ be the canonical decompositions of $X$ and $X^n$ respectively. 
 Using Lemma 2.5 in \cite{Collectanea},
\begin{align}\label{fuljavel}
[f(X^n)-f(X)]_t^{\frac12}
&\le
\left[\int_0^. f'(X_{s-})dM_s-\int_0^. f'(X^n_{s-})dM^n_s\right]_t^{\frac12}
+
\left[V^n-V\right]_t^{\frac12}
\end{align}
for the first term on the right-hand side of \eqref{fuljavel} we have, due to Lemma \ref{triangle}
\begin{align}\label{Mterms}
\left[\int_0^. f'(X_{s-})dM_s-\int_0^. f'(X^n_{s-})dM^n_s\right]_t^{\frac12}
&\le
\left[\int_0^. \left(f'(X_{s-})- f'(X^n_{s-})\right) dM_s\right]_t^{\frac12}
+
\left[\int_0^. \left(f'(X^n_{s-})\right) d(M_s-M^n_s)\right]_t^{\frac12}\nonumber
\\
&=
\left( \int_0^t \left(f'(X_{s-})- f'(X^n_{s-})\right)^2 d[M]_s\right)^{\frac12}
+
\left(\int_0^t f'(X^n_{s-})^2 d[M-M^n]_s\right)_t^{\frac12}.
\end{align}
On the set $B_R$ we have that $\left(f'(X_{s-})- f'(X^n_{s-})\right)^2\le 4L^2$ so it follows by dominated convergence (applied pathwise) that
$$\lim_{n\to\infty}\left( \int_0^t \left(f'(X_{s-})- f'(X^n_{s-})\right)^2 d[M]_s\right)^{\frac12}1_{B_R}=0. $$
Hence, for every $\epsilon>0$
$$\P\left( \left( \int_0^t \left(f'(X_{s-})- f'(X^n_{s-})\right)^2 d[M]_s\right)^{\frac12}\ge \epsilon\right)
\le
 \P\left(\left( \int_0^t \left(f'(X_{s-})- f'(X^n_{s-})\right)^2 d[M]_s\right)^{\frac12}1_{B_R}\ge \epsilon\right) + \P(B_R^c)$$
where the first term converges to zero as $n\to\infty$ and the second term as $R\to \infty$. For the second term on the right-hand side of \eqref{Mterms}, note that $[M-M^n]_t\xrightarrow{\P}0$ by Lemma \ref{lem:SD-decomposition-stability} and therefore
$$\P\left( \left(\int_0^t f'(X^n_{s-})^2 d[M-M^n]_s\right)_t^{\frac12}\ge \epsilon\right)
\le
\P\left( L[M-M^n]_t^{\frac12}\ge \epsilon\right) + \P(B_R^c)$$
which converges to zero. This takes care of the first term on the right-hand side of \eqref{fuljavel}. For the second term on the right-hand side of note that, by Lemma \ref{lem:disc-qv-triangle}
\begin{align*}
[V^n-V]^{\frac12}_t=\left([V^n-V]^d_t\right)^{\frac12}
\le
\left[\int_0^. f'(X_{s-})dM_s-\int_0^. f'(X^n_{s-})dM^n_s\right]_t^{\frac12}
+
\left([f(X^n)-f(X)]^d_t\right)^{\frac12}.
\end{align*}
We have already shown that the first term on the right-hand side above converges to zero so to finish the proof it only remains to prove that also the second term vanishes. Set
\[
        F^n:=f(X^n)-f(X),
        \qquad
        D^n:=X^n-X .
\]
We work on the localized set \(B_R\). On this set \(f\) is Lipschitz on
the relevant compact interval; let \(K_R\) denote a corresponding
Lipschitz constant. Thus, whenever both arguments stay in the localized
range,
\[
        |f(x)-f(y)|\le K_R |x-y|.
\]

Fix \(r>0\). By Lemma~\ref{lem:thres}, choose \(k\ge1\) such that the
event \(A_k(r)\) has probability arbitrarily close to one, and put
\[
        \rho:=L(r,k).
\]
On \(A_k(r)\cap B_n(k,r)\), the classification of jumps above and below
the threshold \(\rho\) is preserved. In particular,
\[
        |\Delta X^n_s|>\rho
        \quad\Longleftrightarrow\quad
        |\Delta X_s|>\rho .
\]
Let
\[
        J_\rho:=\{s\le t:|\Delta X_s|>\rho\}.
\]
Since \(X\) has finite quadratic variation, \(J_\rho\) is finite.

We split
\[
        [F^n]^d_t
        =
        \sum_{s\in J_\rho}(\Delta F^n_s)^2
        +
        \sum_{s\le t:\,|\Delta X_s|\le \rho}(\Delta F^n_s)^2
\]
on the event \(A_k(r)\cap B_n(k,r)\).

First consider the large-jump part. If \(s\in J_\rho\), then by
Lemma~\ref{lem:QVJ1},
\[
        X^n_{s-}\to X_{s-}.
\]
Moreover,
\[
        \Delta X^n_s-\Delta X_s
        =
        \Delta D^n_s\to0,
\]
because
\[
        |\Delta D^n_s|^2\le [D^n]_t\to0.
\]
Hence also \(X^n_s\to X_s\). Since \(f\) is continuous,
\[
        \Delta F^n_s
        =
        \Delta f(X^n)_s-\Delta f(X)_s
        \longrightarrow 0
\]
for every \(s\in J_\rho\). Since \(J_\rho\) is finite,
\[
        \sum_{s\in J_\rho}(\Delta F^n_s)^2\longrightarrow 0.
\]

It remains to control the small-jump part. On
\(A_k(r)\cap B_n(k,r)\), if \(|\Delta X_s|\le \rho\), then
\(|\Delta X^n_s|\le \rho\). Therefore, using the Lipschitz bound,
\[
\begin{aligned}
|\Delta F^n_s|
&=
\left|
        \bigl(f(X^n_s)-f(X^n_{s-})\bigr)
        -
        \bigl(f(X_s)-f(X_{s-})\bigr)
\right|
\\
&\le
        K_R |\Delta X^n_s|
        +
        K_R |\Delta X_s|.
\end{aligned}
\]
Consequently,
\[
        (\Delta F^n_s)^2
        \le
        2K_R^2\bigl((\Delta X^n_s)^2+(\Delta X_s)^2\bigr).
\]
Summing over the small jumps gives
\[
\begin{aligned}
\sum_{s\le t:\,|\Delta X_s|\le \rho}(\Delta F^n_s)^2
&\le
        2K_R^2
        \sum_{s\le t:\,|\Delta X_s|\le \rho}(\Delta X^n_s)^2
        +
        2K_R^2
        \sum_{s\le t:\,|\Delta X_s|\le \rho}(\Delta X_s)^2.
\end{aligned}
\]
Since
\[
        \Delta X^n_s=\Delta X_s+\Delta D^n_s,
\]
we have
\[
        (\Delta X^n_s)^2
        \le
        2(\Delta X_s)^2+2(\Delta D^n_s)^2.
\]
Thus
\[
\begin{aligned}
\sum_{s\le t:\,|\Delta X_s|\le \rho}(\Delta F^n_s)^2
&\le
        6K_R^2
        \sum_{s\le t:\,|\Delta X_s|\le \rho}(\Delta X_s)^2
        +
        4K_R^2
        \sum_{s\le t}(\Delta D^n_s)^2
\\
&\le
        6K_R^2
        \sum_{s\le t:\,|\Delta X_s|\le \rho}(\Delta X_s)^2
        +
        4K_R^2 [D^n]_t .
\end{aligned}
\]
Letting \(n\to\infty\), the second term vanishes. Hence, on
\(A_k(r)\cap B_n(k,r)\),
\[
        \limsup_{n\to\infty}[F^n]^d_t
        \le
        6K_R^2
        \sum_{s\le t:\,|\Delta X_s|\le \rho}(\Delta X_s)^2.
\]
Finally, as \(\rho\downarrow0\),
\[
        \sum_{s\le t:\,|\Delta X_s|\le \rho}(\Delta X_s)^2
        \longrightarrow0,
\]
because
$
        \sum_{s\le t}(\Delta X_s)^2<\infty.
$

Since \(r>0\) was arbitrary and the events \(A_k(r)\cap B_n(k,r)\) have
probability tending to one, it follows that
\[
        [f(X^n)-f(X)]^d_t
        =
        [F^n]^d_t
        \xrightarrow{\mathbb P}0.
\]

Combining this with the previous estimates yields
\[
        [f(X^n)-f(X)]_t\xrightarrow{\mathbb P}0.
\]
This completes the proof.
\end{proof}

\hfill
\begin{thebibliography}{99}

\bibitem[Kennerberg and Wiktorsson (2026)]{Collectanea}
Kennerberg, P. and Wiktorsson, M. (2026).
Stability in quadratic variation.
Collectanea Mathematica.
\bibitem[Coquet, M\'emin and S{\l}omi{\'n}ski(2003)]{NonCont}
Coquet, F., M\'emin, J. and S{\l}omi{\'n}ski, L. (2003).
On Non-Continuous Dirichlet Processes.
\textit{Journal of Theoretical Probability}, {\bf 16}, 197.
\bibitem[F\"ollmer(1981)]{Fol}
F\"ollmer, H. (1981). Dirichlet processes. In \textit{Lecture Notes in Maths.}, Vol. {\bf 851}, pp. 476-–478, Springer-Verlag, Berlin/Heidelberg/New York.
\bibitem[F\"ollmer(1981)]{FolIto}
  F\"ollmer, H. (1981). Calcul d'Ito sans probabilit\'es. In \textit{S\'eminaire de probabilit\'es (Strasbourg).}, Vol. {\bf 851}, , pp. 143-150, Springer-Verlag, Berlin/Heidelberg/New York.
\bibitem[Jacod and Shiryaev(2003)]{JACOD}
Jacod, J. and Shiryaev, A. N. (2003). 
\textit{Limit Theorems for Stochastic Processes}. Springer-Verlag.
\bibitem[Liptser and Shiryaev(1986)]{Mart}
Liptser, R. and Shiryaev, A. (1986) \textit{Theory of Martingales}. Springer-Verlag.
\bibitem[Lowther(2010)]{Low}
Lowther, G. (2010). Nondifferentiable functions of one-dimensional semimartingales. \textit{Ann. Prob.} {\bf 38} (1) 76 - 101.
\bibitem[Protter(1992)]{PRT}
Protter, P. (1992).	
\textit{Stochastic Integration and Differential Equations}. Springer-Verlag, Berlin, Heidelberg, Second edition.
\bibitem[Kechris(1995)]{Kechris1995}
Alexander~S. Kechris,
\emph{Classical Descriptive Set Theory},
Graduate Texts in Mathematics, vol.~156,
Springer, New York, 1995.
\bibitem[Bruckner(1978)]{Bruckner1978}
Bruckner, A. M. (1978).
\textit{Differentiation of Real Functions}.
Lecture Notes in Mathematics, Vol.~659.
Springer, Berlin/Heidelberg/New York.
\bibitem[Gozzi and Russo(2006)]{WeakDir}
Gozzi, F. and Russo, F. (2006).
Weak Dirichlet processes with a stochastic control perspective.
\textit{Stochastic Processes and their Applications}
{\bf 116} (11), 1563--1583.
\bibitem[Kennerberg(2026)]{CompJump}
Kennerberg, P. (2026).
Stability of Compensated Jump Integrals under Quadratic Variation Convergence.
\textit{arXiv preprint arXiv:2605.11783}.
\end {thebibliography}
\end{document}